\documentclass[11pt]{article}
\usepackage{amsmath}
\usepackage{amssymb}
\usepackage{bbm}

\hoffset=-1.8truecm \voffset=-2.1truecm \textwidth=15.5truecm
\textheight=8.5truein  

 \newcommand{\beq}{\begin{equation}}
\newcommand{\eeq}{\end{equation}}

\newtheorem{theorem}{Theorem}[section]

\newenvironment{proof}{\medbreak\noindent{\it Proof.}\rm}{\hfill$\square$\rm}

\newcommand{\cO}{{\mathcal O}}
\newcommand{\cD}{{\mathcal D}}
\newcommand{\cR}{{\mathcal R}}
\newcommand{\PSH}{{\operatorname{PSH}}}
\newcommand{\Cn}{{\mathbb  C\sp n}}
\newcommand{\D}{{\mathbb  D}}
\newcommand{\Zp}{{\mathbb  Z}_+}
\newcommand{\vph}{\varphi}

\begin{document}
\begin{center}
{\Large\bf Zero Lelong number problem}
\end{center}

\begin{center}
{\large Alexander Rashkovskii}
\end{center}

\begin{abstract}
We discuss several related problems on residual Monge-Amp\`ere masses of plurisubharmonic functions. The note is based on the author's talk at the 27th Congress of Nordic Mathematicians, March 19, 2016.

\medskip\noindent
{\sl Mathematic Subject Classification}: 32U05, 32U25, 32U35, 32W20
\end{abstract}

\vskip10pt

\section{Introduction} Here we give a detailed exposition of Questions 7 and 8 from a recent list of open problems in pluripotential theory \cite{DGZ16}. The note is based on the author's talk at the 27th Congress of Nordic Mathematicians, March 19, 2016.

\medskip
Recall that a function $u$ in a domain $\omega\subset\Cn$ is plurisubharmonic if it is upper semicontinuous and such that the composition $u\circ\gamma$ is subharmonic in the unit disk $\D$ for any holomorphic mapping $\gamma:\D\to \omega$. By $\PSH(\omega)$ (resp., $\PSH_0$) we denote the collection of all plurisubharmonic functions in $\omega$ (resp., germs of plurisubharmonic functions at $0\in\Cn$).

A germ $u\in\PSH_0$ is said to be {\it singular} if $u(0)=-\infty$.

A basic characteristic of singularity of $u$ is its {\it Lelong number}
$$ \nu_u=\nu_u(0)=\liminf_{z\to 0}\frac{u(z)}{\log|z|};$$
this is the largest number $\nu\ge 0$ such that
\begin{equation}\label{bd}u(z)\le\nu \log|z|+O(1)\end{equation} near $0$.
Equivalently,
\beq\label{prod}\nu_u=dd^cu\wedge(dd^c\log|z|)^{n-1}(0),\eeq
where $d=\partial+\bar\partial$, $d^c=(\partial-\bar\partial)/2\pi i$.

\medskip
If $f\in\cO_0$ is a holomorphic function near $0$, then $\nu_{\log|f|}={\rm mult}_0 f$, the multiplicity of the $f$ at $0$. However, if $f=(f_1,\ldots,f_m)$ is a holomorphic mapping, then $\nu_{\log|f|}=\min_k{\rm mult}_0 f_k$ is far from the multiplicity of the zero.

Let $\PSH_0^*$ be the germs that are locally bounded outside $0$ (i.e., with {\it isolated singularity} at $0$). The complex Monge-Amp\`ere operator $(dd^c u)^n$ is well defined on such a function $u$ \cite{D93}, and we denote by
$$\tau_u= (dd^cu)^n(0)$$
its {\it residual Monge-Amp\`ere mass} at $0$.

For the mappings $f$ with isolated zero, we have ${\rm mult}_0 f= \tau_{\log|f|}$ \cite{D93}.

\bigskip

\section{ Relations between the characteristics of singularity} For a holomorphic mapping $f$ to $\Cn$, by the local B\'ezout's theorem,
 $${\rm mult}_0 f \ge \prod_k{\rm mult}_0 f_k\ge (\min_k{\rm mult}_0 f_k)^n.$$

\begin{theorem} {\rm \cite{D93}\ } If $u\in \PSH_0^*$, then $\tau_u\ge \nu_u^n$.
\end{theorem}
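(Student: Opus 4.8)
The plan is to compare $u$ with the model weight $\nu\log|z|$, $\nu:=\nu_u$, and to read off the estimate from a Stokes-type identity on small balls, after first perturbing the model weight so that it becomes strictly less singular than $u$ at the origin. We may assume $\nu>0$, since otherwise $\tau_u\ge 0=\nu_u^n$. By \eqref{bd} there are a constant $C$ and $r_0\in(0,1]$ with $u(z)\le \nu\log|z|+C$ on the ball $B_{r_0}:=\{|z|<r_0\}$. I will use two standard facts: first, $(dd^c\log|z|)^n=\delta_0$ (for instance $\tau_{\log|z|}=1$ via the identity map, and $(dd^c\log|z|)^n$ carries no mass away from $0$), so that $(dd^c c\log|z|)^n=c^n\delta_0$ for every $c>0$; and second, since $(dd^c u)^n$ is a finite positive measure near $0$ with isolated singular support, $\tau_u=(dd^c u)^n(\{0\})=\lim_{r\to0^+}\int_{B_r}(dd^c u)^n$.

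Fix $\varepsilon\in(0,\nu)$ and set $v_\varepsilon:=(\nu-\varepsilon)\log|z|$. Since $\log|z|\le 0$ on $B_{r_0}$, one gets $u-v_\varepsilon\le\varepsilon\log|z|+C$, hence $u(z)-v_\varepsilon(z)\to-\infty$ as $z\to0$; this is exactly what using $\nu-\varepsilon$ instead of $\nu$ buys. Now fix $r\in(0,r_0)$. Because $u$ is bounded below on each compact shell $\{r/2\le|z|\le r\}$ while $u-v_\varepsilon\to-\infty$ at $0$, I can choose a constant $a=a(r,\varepsilon)$ so negative that the plurisubharmonic function $w:=\max(u,v_\varepsilon+a)$ on $B_r$ satisfies both conditions at once: $w\equiv u$ on a neighbourhood of $\partial B_r$ (it suffices that $a<\inf_{r/2\le|z|\le r}u$, since $v_\varepsilon\le0$ there), and $w\equiv v_\varepsilon+a$ on a neighbourhood of $0$ (it suffices that $a$ lie below the threshold past which $\{u-v_\varepsilon<a\}$ contains a ball about $0$, which exists by the previous estimate). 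Note that $w$ has an isolated singularity at $0$, so $(dd^c w)^n$ is well defined.

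From $w\equiv u$ near $\partial B_r$ and Stokes' theorem for the Monge--Amp\`ere operator, the total masses agree: $\int_{B_r}(dd^c w)^n=\int_{B_r}(dd^c u)^n$. From $w\equiv v_\varepsilon+a$ near $0$ and locality of the Monge--Amp\`ere operator, $(dd^c w)^n$ coincides near $0$ with $(dd^c v_\varepsilon)^n=(\nu-\varepsilon)^n\delta_0$, so $(dd^c w)^n(\{0\})=(\nu-\varepsilon)^n$. Since $(dd^c w)^n$ is a positive measure, $\int_{B_r}(dd^c u)^n=\int_{B_r}(dd^c w)^n\ge(dd^c w)^n(\{0\})=(\nu-\varepsilon)^n$. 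Letting $r\to0^+$ gives $\tau_u\ge(\nu-\varepsilon)^n$, and then $\varepsilon\to0^+$ gives $\tau_u\ge\nu^n$.

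The delicate point — and the reason for the perturbation — is the gluing. If one works directly with $\max(u,\nu\log|z|+a)$, then when the singularity of $u$ matches that of $\nu\log|z|$ only up to a bounded term, the origin may lie on the common boundary of $\{u>\nu\log|z|+a\}$ and $\{u<\nu\log|z|+a\}$, so the glued function need not equal $\nu\log|z|+a$ on any neighbourhood of $0$, and locality cannot be invoked there. Passing to the strictly less singular weight $(\nu-\varepsilon)\log|z|$ forces $u-v_\varepsilon\to-\infty$ at the origin, which repairs the gluing, at the sole price of the harmless final limit $\varepsilon\to0$. A secondary technical point to be checked carefully is that the total Monge--Amp\`ere mass over $B_r$ is unchanged under a modification that leaves a neighbourhood of $\partial B_r$ untouched; this one justifies by applying Stokes' theorem on $B_r\setminus B_\delta$ and letting $\delta\to0$.
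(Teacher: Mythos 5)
Your proof is correct, and the underlying idea---playing the bound (\ref{bd}) against the model weight $\nu_u\log|z|$---is the same as the paper's. The difference is how the comparison is cashed in: the paper simply invokes Demailly's Comparison Theorem (CT) with $u_j=u$, $v_j=\nu_u\log|z|$, which gives $\tau_u\ge(dd^c\nu_u\log|z|)^n(0)=\nu_u^n$ in one line, whereas you re-derive the needed special case via the classical max-gluing $w=\max(u,(\nu-\varepsilon)\log|z|+a)$, locality of the Monge--Amp\`ere operator near $0$, and invariance of the total mass on $B_r$ under modifications away from $\partial B_r$. This is in essence how (CT) itself is proved (Demailly uses $\max(u-c,\lambda v)$ with $c\to\infty$, $\lambda\to1$ playing the roles of your $a$ and $\varepsilon$), so your version gains self-containedness rather than depth. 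Be aware that the lemma you flag as ``secondary''---equality of $\int_{B_r}(dd^cu)^n$ and $\int_{B_r}(dd^cw)^n$ when $u=w$ near $\partial B_r$ but both blow up at $0$---is exactly where the substance of (CT) resides: a naive Stokes argument on $B_r\setminus B_\delta$ leaves boundary terms on $\partial B_\delta$ that are not obviously negligible, and the clean route is integration by parts against a cutoff $\chi$ (the term $\int(u-w)\,dd^c\chi\wedge S$ vanishes since $dd^c\chi$ is supported where $u=w$), which relies on Demailly's theory making the mixed currents $(dd^cu)^j\wedge(dd^cw)^{n-1-j}$ and the integration by parts legitimate for functions in $\PSH_0^*$. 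Your $\varepsilon$-perturbation forcing $u-v_\varepsilon\to-\infty$ at $0$ is precisely the right fix for the gluing, and the limits $r\to0$, $\varepsilon\to0$ are handled correctly.
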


\begin{proof} This follows from relations (\ref{bd})-(\ref{prod}) and Demailly's Comparison Theorem \cite{D93}
$$ {\rm (CT)}\qquad u_j\le v_j+O(1) \Rightarrow \bigwedge_j dd^c u_j\,(0)\ge \bigwedge_j dd^c v_j\,(0) $$
applied to $u_j=u$, $v_j=\nu_u\log|z|$.
\end{proof}

\medskip
No reverse bound is possible: take $u=\max\{k\log|z_1|,\log|z_2|\}$, then $\nu_u=1$ while $\tau_u=k$.

\bigskip\noindent
 {\bf Problem 1} ({\it Zero Lelong Number Problem}; V. Guedj, A. Rashkovskii, 1999):
 {\sl Is the implication }$$(P1)\qquad\nu_u=0\ \Rightarrow\ \tau_u=0$$
{\sl true whenever $(dd^cu)^n$ is well defined (e.g., for $u\in \PSH_0^*$)?} (This is Question 7 from \cite{DGZ16}.)

\medskip
Note that even in $n=2$ the condition $\nu_u=0$ does not imply $dd^cu\wedge dd^cv\,(0)=0$ for any $v$. For example, if $u=\max\{-|\log|z_1||^{1/2},\log|z_2|\}$ and $v=\log|z_1|$, then $\nu_u=0$ and $dd^cu\wedge dd^cv=\delta_0$.

\bigskip

\section{ Finite {\L}ojasiewicz exponent}
Denote
$$ \gamma_u=\limsup_{z\to 0}\frac{u(z)}{\log|z|},$$
{\it the {\L}ojasiewicz exponent of $u$} at $0$.

By (CT), we get

\begin{theorem} Finite {\L}ojasiewicz exponent
$$ (FLE)\qquad \gamma_u<\infty$$
 implies (P1) is true.
 \end{theorem}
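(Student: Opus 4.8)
The plan is a one-line use of (CT) to ``peel off'' a single factor $\log|z|$, followed by induction on the dimension $n$, with generic hyperplane slicing supplying the inductive step. Concretely, I would prove by induction on $n$ the following: if $u\in\PSH_0^*$ on a domain in $\mathbb{C}^n$ with $\gamma_u<\infty$ and $\nu_u=0$, then $\tau_u=0$. For $n=1$ there is nothing to do, since then $\tau_u=(dd^cu)(0)=\nu_u=0$.

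For the inductive step, fix $\epsilon>0$. By the very definition of $\gamma_u$, for $z$ near $0$ one has $u(z)/\log|z|<\gamma_u+\epsilon$, i.e.\ $(\gamma_u+\epsilon)\log|z|\le u+O(1)$. Applying (CT) with $u_1=(\gamma_u+\epsilon)\log|z|$, $v_1=u$, and $u_j=v_j=u$ for $j\ge2$ — equivalently, wedging the one-factor comparison with the closed positive current $(dd^cu)^{n-1}$ — gives
$$\tau_u=(dd^cu)^n(0)=dd^cu\wedge(dd^cu)^{n-1}(0)\ \le\ (\gamma_u+\epsilon)\,dd^c\log|z|\wedge(dd^cu)^{n-1}(0).$$
Thus it suffices to show that the mixed mass $\mu:=dd^c\log|z|\wedge(dd^cu)^{n-1}(0)$ vanishes.

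To see this I would slice. Writing $dd^c\log|z|$ as the average $\int[H]\,d\sigma(H)$ of integration currents over complex hyperplanes $H$ through $0$, and using Demailly's restriction formula $(dd^cu)^{n-1}\wedge[H]=(dd^c(u|_H))^{n-1}$, valid for $\sigma$-a.e.\ $H$, one obtains $\mu=\int\tau_{u|_H}\,d\sigma(H)$. For generic $H$ the restriction $u|_H$ lies in $\PSH_0^*$ on $H\cong\mathbb{C}^{n-1}$, and moreover $\gamma_{u|_H}\le\gamma_u<\infty$ (a $\limsup$ over a smaller set) and $\nu_{u|_H}=\nu_u=0$ (Siu's theorem on generic slices). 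The induction hypothesis, applied on $H$, then gives $\tau_{u|_H}=0$ for $\sigma$-a.e.\ $H$, whence $\mu=0$ and $\tau_u=0$. The same bookkeeping in fact produces the quantitative inequality $\tau_u\le\gamma_u^{\,n-1}\,\nu_u$ whenever $\gamma_u<\infty$, of which the theorem is the case $\nu_u=0$.

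The one point that genuinely needs care is the slicing identity for $\mu$: one must know that a generic hyperplane section may legitimately be intersected with $(dd^cu)^{n-1}$ and that this intersection computes the Monge-Amp\`ere mass of the restricted function — i.e.\ that the set of ``bad'' hyperplanes is $\sigma$-negligible. This is precisely where Demailly's intersection theory for closed positive currents (here with $0$-dimensional unbounded locus) is used. Everything else is essentially formal, and the argument makes transparent why finiteness of $\gamma_u$ is the relevant hypothesis: it is exactly what allows $u$ to be bounded below by an honest logarithmic pole $(\gamma_u+\epsilon)\log|z|$, without which (CT) would yield nothing on the first factor.
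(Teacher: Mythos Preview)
Your argument is correct and even lands on the same quantitative bound $\tau_u\le\gamma_u^{\,n-1}\nu_u$, but the route is considerably longer than the paper's. You perform the first peel-off correctly --- replacing one factor $dd^cu$ by $(\gamma_u+\epsilon)\,dd^c\log|z|$ via (CT) --- but then, instead of simply repeating that same move on the remaining $n-1$ factors, you switch to Crofton, hyperplane slicing, a restriction formula for Monge--Amp\`ere masses, and Siu's theorem on generic slices in order to set up an induction on $n$.

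The paper's proof is literally the phrase ``By (CT)'': take $u_1=\cdots=u_{n-1}=(\gamma_u+\epsilon)\log|z|$, $u_n=u$, and $v_1=\cdots=v_n=u$. Then $u_j\le v_j+O(1)$ for every $j$ (the FLE inequality for $j<n$, trivially for $j=n$), and (CT) gives in one stroke
$$(\gamma_u+\epsilon)^{n-1}\,\nu_u \;=\; (\gamma_u+\epsilon)^{n-1}\,dd^cu\wedge(dd^c\log|z|)^{n-1}(0)\ \ge\ (dd^cu)^n(0)\;=\;\tau_u,$$
so $\nu_u=0$ forces $\tau_u=0$. No slicing, no induction, no restriction identity --- and in particular none of the delicate point you yourself flag about intersecting $(dd^cu)^{n-1}$ with a generic $[H]$ through the singular point. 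Your detour through slices does give an alternative interpretation of the intermediate mass $e_{n-1}(u)=dd^c\log|z|\wedge(dd^cu)^{n-1}(0)$ as an average of residual masses of hyperplane sections, which is pleasant; but for the theorem at hand, (CT) already lets you replace all $n-1$ factors at once, making the induction and the accompanying technicalities unnecessary.
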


\medskip\noindent
{\bf Examples:} \begin{enumerate}
\item Any {\it analytic singularity} $u=\log|f|+O(1)\in \PSH_0^*$ has FLE (the classical {\L}ojasiewicz exponent of the mapping $f$).

\item {\it Multicircled singularities} $u(z)=u(|z_1|,\ldots,|z_n|)+O(1)\in \PSH_0^*$ have FLE \cite{R00bis}, \cite{R01}.

\end{enumerate}

\bigskip

\section{ Greenifications}
FLE condition might look too restrictive.
But, for $n=1$, any (pluri)subharmonic germ $u$ at $0$ represents as $u(z)=g_u+ v(z)$, where $g_u(z)=\nu_u\,\log|z|$ and  $v$ is a (pluri)subharmonic function with zero Lelong number, so $\nu_{g_u}=\nu_u$ and $g_u$ definitely has FLE.

\medskip

For $n\ge 1$, a 'greenification' \cite{R06} of $u$ is defined in a neighborhood $\omega$ of $0$ as  $$g_u(z):=\limsup_{x\to z}\sup\{v(x):\:  v\in \PSH(\omega),\ v\le 0,\ v\le u+O(1)\ {\rm near\ } 0\}.$$
If $u\in \PSH_0^* $, then $(dd^c g_u)^n=0$ on $\omega\setminus 0$; in other words, $g_u$ is a {\it maximal singularity}. In addition, $\nu_{g_u}=\nu_u$ and $\tau_{g_u}=\tau_u$.

\begin{theorem} {\rm \cite{W05}, \cite{R06}} For $u\in\PSH_0^*$, $g_u\equiv 0 $ if and only if $\tau_{u}=0$.
\end{theorem}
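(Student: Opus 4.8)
\medskip
\noindent\textbf{Proof proposal.}
The forward implication is immediate from the properties of $g_u$ already recorded: if $g_u\equiv0$ then $\tau_u=\tau_{g_u}=(dd^c0)^n(0)=0$. For the converse, assume $\tau_u=0$; we may take the reference neighbourhood $\omega$ to be a ball (hence hyperconvex, with a negative plurisubharmonic exhaustion $\rho\to0$ at $\partial\omega$), which is the natural setting for greenifications. Since $g_u$ is a maximal singularity, $(dd^cg_u)^n=0$ on $\omega\setminus\{0\}$, while the mass of this positive measure at the origin is $(dd^cg_u)^n(\{0\})=\tau_{g_u}=\tau_u=0$; hence $(dd^cg_u)^n$ vanishes on all of $\omega$. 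So it remains to prove that a greenification $g_u\le0$ whose complex Monge--Amp\`ere measure vanishes identically must be $\equiv0$.

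The plan is to identify $g_u$ as the solution of a Dirichlet problem and invoke uniqueness, so I would first check that $g_u$ has boundary value $0$ on $\partial\omega$. Only $\liminf_{z\to\partial\omega}g_u(z)\ge0$ needs an argument, since $g_u\le0$. Fix any competitor $v_0$ in the envelope defining $g_u$ (such exist, e.g.\ $u+c$ for a suitable constant after a slight shrinking of $\omega$) and a ball $\bar B(0,r)\Subset\omega$ on which $v_0\ge-M$. Choosing $C>0$ so large that $C\rho\le-M$ on $\bar B(0,r)$, the function $\max\{v_0,C\rho\}$ is again a competitor: it is plurisubharmonic, $\le0$, and equals $v_0$ on $\bar B(0,r)$, so it still obeys the singularity bound $\le u+O(1)$ near $0$. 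Since $\max\{v_0,C\rho\}\to0$ at $\partial\omega$ and $g_u\ge\max\{v_0,C\rho\}$, we conclude $g_u\to0$ at $\partial\omega$.

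Now $g_u$ and the constant $0$ have the same Monge--Amp\`ere measure (namely $0$) on $\omega$ and the same boundary values, so uniqueness for the Dirichlet problem for the complex Monge--Amp\`ere operator (the comparison/domination principle) forces $g_u\equiv0$. The delicate point, which I expect to be the main obstacle, is that $g_u$ could a priori be unbounded at $0$, so the Bedford--Taylor comparison principle does not apply directly. I would deal with this in one of two ways: either (i) show first that a maximal singularity with vanishing residual Monge--Amp\`ere mass is automatically bounded near $0$ --- equivalently, extends plurisubharmonically across $0$ --- after which Bedford--Taylor applies verbatim; or (ii) observe that $g_u$, having zero boundary values and finite total Monge--Amp\`ere mass, lies in Cegrell's class $\mathcal{F}(\omega)$, and use the uniqueness theorem there (two functions of $\mathcal{F}(\omega)$ with the same Monge--Amp\`ere measure coincide) applied to $g_u$ and $0$.
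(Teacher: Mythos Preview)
The paper does not supply a proof of this theorem; it is quoted from \cite{W05} and \cite{R06}. So there is no ``paper's own proof'' to match, and your outline---reduce to $(dd^cg_u)^n\equiv 0$ with zero boundary data, then invoke uniqueness---is exactly the natural route and is in the spirit of those references. The forward implication is fine.

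There is, however, a genuine gap in your boundary-value step. You fix a competitor $v_0$ (for instance $u+c$) and a ball $\bar B(0,r)$ ``on which $v_0\ge -M$''. But the only interesting case is $u(0)=-\infty$; then every competitor $v_0$ satisfies $v_0(0)=-\infty$ as well (since $v_0\le u+O(1)$ near $0$), so no such lower bound $-M$ exists on any ball about the origin. Consequently $\max\{v_0,C\rho\}$ equals the \emph{bounded} function $C\rho$ in a punctured neighbourhood of $0$, which violates the constraint $\le u+O(1)$ and is therefore \emph{not} a competitor. Your construction, as written, only works when $u$ is bounded at $0$, which is the trivial case $g_u\equiv 0$ and $\tau_u=0$ automatically.

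The repair is precisely your option (ii), and that is where the actual content lies: one shows that $g_u$ belongs to the Cegrell class $\mathcal F(\omega)$ (or a suitable variant) when $u\in\PSH_0^*$, so that the ``zero boundary values'' are encoded in the class itself rather than verified pointwise. Then the uniqueness theorem in $\mathcal F(\omega)$ gives $g_u=0$ once $(dd^cg_u)^n=0$. Your option (i), by contrast, is essentially circular: proving that $g_u$ is bounded near $0$ is tantamount to proving $g_u\equiv 0$. I would drop the ad hoc barrier construction and instead argue directly that $g_u\in\mathcal F(\omega)$; this is the step that needs a reference or a short argument, and once it is in place the rest of your proof goes through.
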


So: (P1) is equivalent to the following question: {\sl For $u\in\PSH_0^*$, does $\nu_u=0$ imply $ g_u\equiv 0$?}

\bigskip\noindent
{\bf Problem 1.1:} {\sl Construct a maximal singularity $\vph\in \PSH_0^*$ with $\gamma_\vph=\infty$.}

\bigskip
{\it Remark:} There exist $u\in \PSH(\omega)$ with non-isolated, maximal singularity at $0$ and well-defined Monge-Amp\`ere operator $(dd^cu)^n$ \cite{R13}; moreover, such a function can have the set $ \{u=-\infty\}$ dense in $\omega$ \cite{ACH}.

\medskip
By removing the condition of isolated singularity, one gets

\bigskip\noindent
{\bf Problem 1.2.} {\sl For $u\in\PSH_0$, is the implication }$$(P1')\qquad\nu_u=0\ \Rightarrow\ g_u\equiv 0$$
{\sl true?}

\medskip

More classes with FLE property to come when considering Problem 2 below.

\bigskip

\section{ Intermediate Lelong numbers}
For $u\in \PSH_0^*$, denote
$$ e_k= e_k(u)=(dd^cu)^k\wedge(dd^c\log|z|)^{n-k}(0),\quad k=0,\ldots,n,$$
so $e_0=1$, $e_1=\nu_u$, $e_n=\tau_u$. If $u=\log|f|$ for a holomorphic mapping $f$ with the zero set of codimension $k$, then $e_k$ equals its multiplicity at $0$.

\medskip

As follows from \cite{Ce04}, these {\it intermediate Lelong numbers} satisfy
$$e_k^2\le e_{k-1}\cdot e_{k+1}$$
which was noticed in \cite{DH14} (in analytic setting $u=\log|f|$, it was established in \cite{Te1}).

\begin{theorem} {\rm \cite{DH14}}
$ e_1=0$ implies $e_k=0$ for all positive $k<n$.
\end{theorem}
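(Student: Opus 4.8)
The plan is to deduce the statement purely combinatorially from the log-convexity inequality $e_k^2 \le e_{k-1}\cdot e_{k+1}$ recalled just above, which holds for $1 \le k \le n-1$, together with the hypothesis $e_1 = 0$ and the fact that for $u \in \PSH_0^*$ each $e_k$ is a finite nonnegative number.

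First I would set up an induction on $k$. The base case is free: $e_1 = 0$ is the hypothesis (and for $n \le 2$ there is nothing more to prove). For the inductive step, fix $k$ with $2 \le k \le n-1$ and assume $e_{k-1} = 0$. Applying the inequality at index $k$ gives $e_k^2 \le e_{k-1}\cdot e_{k+1} = 0\cdot e_{k+1}$. Because $e_{k+1}$ is finite --- this is where isolatedness of the singularity enters, guaranteeing that all the mixed Monge-Amp\`ere masses are finite --- the right-hand side is $0$, so $e_k^2 \le 0$ and hence $e_k = 0$. Iterating from $k = 2$ up to $k = n-1$ yields $e_k = 0$ for every positive $k < n$.

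There is essentially no obstacle in this deduction: the whole weight of the theorem sits in the inequality $e_k^2 \le e_{k-1}e_{k+1}$, which is the cited result and which I am allowed to assume. The one point worth flagging in the write-up is the reason the argument stops exactly at $k = n-1$: pushing it one step further would require an inequality of the form $e_n^2 \le e_{n-1}e_{n+1}$, but $e_{n+1}$ is not defined, so no information about $e_n = \tau_u$ is obtained --- which is precisely why this circle of ideas does not settle the Zero Lelong Number Problem $(P1)$. A secondary, purely expository remark is to spell out that $0\cdot e_{k+1} = 0$ is legitimate here only because $e_{k+1} < \infty$ for $u \in \PSH_0^*$.
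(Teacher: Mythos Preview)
Your argument is correct and is exactly the deduction the paper has in mind: the theorem is stated immediately after the log-convexity inequality $e_k^2\le e_{k-1}e_{k+1}$, with no further proof, precisely because the induction you wrote is the intended (and only) step. Your remarks on the finiteness of $e_{k+1}$ and on why the chain stops at $k=n-1$ are also on point.
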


\bigskip

\section{ Demailly's approximations}
Problem 1 may be approached by approximating $u$ by functions for which the (affirmative) answer to Problem 1 is known:
e.g., by those with analytic singularities.

We recall a procedure for analytic approximations due to Demailly \cite{D2}. Given $u\in\PSH(\omega)$ and $k\in\Zp$, 
let $\{f_{k,m}\}_m$ be an orthonormal basis of the weighted Hilbert space
$$H_k(u)=\{f\in\cO(\omega):\: \int_\omega |f|^2 e^{-2ku}\,dV<\infty\}.$$
Then the functions
$$\cD_ku=\frac1{2k}\log\sum_m |f_{k,m}|^2\in PSH(\omega)$$
satisfy $$u\le \cD_ku+\frac{C}{k}$$
and converge to $u$ as $k\to\infty$ (in $L_{loc}^1$ and pointwise).
Moreover, $\nu_{\cD_ku}\to \nu_u$.

\medskip
Assume $u\in \PSH_0^*$, then $\cD_ku \in \PSH_0^*$ have analytic singularities. The condition
$ \nu_u=0$ implies $ \nu_{\cD_ku} =0$, which in turn, since $\cD_ku$ have analytic singularities, gives us $\tau_{\cD_ku}=0$, and it remains to relate these to $\tau_u$.

\bigskip\noindent
{\bf Problem 2} {\it (Demailly)}: {\sl Is the convergence}
$$(P2)\qquad \tau_{\cD_ku}\rightarrow \ \tau_u$$
{\sl true?} (Question 8 from \cite{DGZ16}.)

\bigskip

It is known \cite{Bl09} that the functions $$ \cD_{2^k}u + \frac{C}{2^{k+1}}$$
decrease to $u$ and so, by \cite{D93},
$ \left(dd^c\cD_{2^k}u\right)^n$ converge to $ (dd^cu)^n$ as measures (which does not guarantee convergence of their masses at $0$).

\bigskip

\section{ When (P2) is true} For some classes of functions, convergence (P2) is known. Namely, this is so for:

\medskip

1. {\sl Analytic singularities} \cite{BFJ07} $u=c\log|f|+O(1)\in \PSH_0^*$,  where $f$ are holomorphic mappings with isolated zero.

\medskip Such functions are a particular case of

2. {\it Exponentially H\"{o}lder continuous} functions \cite{BFJ07} $\vph\in\PSH_0^*$,
$$ e^{\vph(x)}-e^{\vph(y)}\le A|x-y|^\beta,\quad \beta>0.$$

\medskip A more general class are:

3. {\sl Tame singularities} : $\vph\in\PSH_0^*$ with the property that there exists $C>0$ such that $\forall t>C$ and
every $f\in\cO_0$ the condtition $|f|e^{-t\vph}\in L_{loc}^2$ implies $\log|f|\le (t-C)\vph+O(1)$. These functions are characterized \cite{BFJ07} by the inequalities
$$u+O(1)\le \cD_ku \le (1-C/k)u+O(1),$$
so (P2) follows from (CT).

\medskip
And even more generally, the convergence holds for

4. {\sl Asymptotically analytic singularities} \cite{R13}: $\forall\epsilon>0$ $\exists\vph_\epsilon$ with analytic singularities such that
$$(1+\epsilon)\vph \le \vph_\epsilon \le (1-\epsilon)\vph.$$

\medskip
In particular, any isolated multicircled singularity is asymptotically analytic \cite{R13}.

\begin{theorem} {\rm \cite{R13}} $\vph\in \PSH_0^*$ is asymptotically analytic if and only if the greenifications $g_{\cD_k\vph}$ satisfy
\beq\label{ratio}g_{\cD_k\vph}/g_\vph\rightarrow 1\eeq
uniformly on $\omega\setminus 0$.
\end{theorem}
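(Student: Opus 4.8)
The plan is to prove the two implications separately, relying on three facts that come from the definitions and from the material recalled above: \textbf{(i)} Demailly's operators are order preserving and satisfy $\cD_k(cu)=c\,\cD_{ck}u$ for $c>0$, together with $u\le\cD_ku+C/k$; \textbf{(ii)} the greenification $u\mapsto g_u$ is order preserving for the relation ``$\le$ up to $O(1)$'' and positively homogeneous, $g_{cu}=c\,g_u$, with $g_u\le 0$; \textbf{(iii)} since analytic singularities are tame, a tame $\psi$ obeys $g_\psi\le g_{\cD_k\psi}\le(1-C/k)g_\psi$, so $g_{\cD_k\psi}/g_\psi\to 1$ uniformly at rate $1/k$ (apply (ii) to $\psi\le\cD_k\psi+O(1)$ and to $\cD_k\psi\le(1-C/k)\psi+O(1)$). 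Throughout $\omega$ is a small ball, so that $g_\vph<0$ on $\omega\setminus 0$ and the ratio is well defined.

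For the ``only if'' part, fix $\epsilon>0$ and let $\vph_\epsilon$ be analytic with $(1+\epsilon)\vph\le\vph_\epsilon\le(1-\epsilon)\vph$. From $\vph\le\frac1{1+\epsilon}\vph_\epsilon$ and (i), $\cD_k\vph\le\frac1{1+\epsilon}\cD_{k/(1+\epsilon)}\vph_\epsilon$; applying the greenification and (ii), $g_{\cD_k\vph}\le\frac1{1+\epsilon}\,g_{\cD_{k/(1+\epsilon)}\vph_\epsilon}$. As $\vph_\epsilon$ is tame, (iii) bounds the right-hand side by $\frac{1-C'/k}{1+\epsilon}\,g_{\vph_\epsilon}$, while $g_{\vph_\epsilon}\le(1-\epsilon)g_\vph$ by (ii) applied to $\vph_\epsilon\le(1-\epsilon)\vph$. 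Combining and dividing by $g_\vph<0$,
\[
\frac{(1-\epsilon)(1-C'/k)}{1+\epsilon}\ \le\ \frac{g_{\cD_k\vph}}{g_\vph}\ \le\ 1,
\]
the upper estimate coming from $\vph\le\cD_k\vph+O(1)$. Letting $k\to\infty$ and then $\epsilon\to0$ yields the uniform convergence \eqref{ratio}.

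For the ``if'' part one reverses the scheme. Uniform convergence of the ratio lets one pick, for each $\epsilon$, an index $k=k(\epsilon)$ with $(1+\epsilon)g_\vph\le g_{\cD_k\vph}\le(1-\epsilon)g_\vph$ on $\omega\setminus0$, and $\psi:=\cD_k\vph$ has analytic singularities. The remaining task is to convert this two-sided estimate for the greenifications into the corresponding one for $\vph$ itself, i.e.\ to produce an analytic-singularity $\vph_\epsilon$ with $(1+\epsilon')\vph\le\vph_\epsilon\le(1-\epsilon')\vph$, $\epsilon'\to0$ with $\epsilon$. The ``less singular'' bound is free: $(1+\epsilon')\vph\le\vph\le\psi+O(1)$. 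The substance is the ``more singular'' bound $\vph_\epsilon\le(1-\epsilon')\vph$: because $\cD_k\vph$ only over-estimates $\vph$ (one always has $\cD_k\vph\ge\vph-O(1)$), $\psi$ by itself need not be singular enough, and one must feed back the greenification estimate, using that $\psi$ is maximal off $0$ (analytic singularity with isolated zero), so $g_\psi$ is equivalent to $\psi$, and that the hypothesis forces $\vph$ to be equivalent, on the relevant scale, to its own greenification, in order to correct $\psi$ into an admissible $\vph_\epsilon$.

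The main obstacle is precisely this last transfer in the ``if'' direction: comparing $\vph$ with $g_\vph$ (and $\cD_k\vph$ with $g_{\cD_k\vph}$) up to bounded terms. In general $g_\vph$ can exceed $\vph$ by an unbounded amount, so the passage from a statement about the maximal representatives back to $\vph$ itself genuinely exploits the \emph{uniformity} of the convergence \eqref{ratio} — it is what excludes mild, sub-logarithmic perturbations that are invisible to Lelong numbers but not to the Demailly approximants — and quantifying this is the technical core of the argument.
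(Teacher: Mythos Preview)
The paper does not supply a proof of this theorem; it is quoted from \cite{R13}, so there is no in-paper argument to compare against. I can therefore only assess your proposal on its own merits.

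Your ``only if'' direction is essentially correct. The monotonicity of $\cD_k$ and of the greenification, the homogeneity $g_{cu}=cg_u$, and the tame two-sided estimate for analytic singularities combine as you indicate to give the squeeze $(1-\epsilon)(1-C'/k)/(1+\epsilon)\le g_{\cD_k\vph}/g_\vph\le 1$. One small technical point: your identity $\cD_k(cu)=c\,\cD_{ck}u$ produces non-integer indices $k/(1+\epsilon)$, which needs a word (either allow real $k$ in the definition of $\cD_k$, or replace $k/(1+\epsilon)$ by its integer part and absorb the discrepancy into the $O(1)$).

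Your ``if'' direction, however, is not a proof but a description of the difficulty. You correctly observe that setting $\psi=\cD_k\vph$ gives the easy bound $(1+\epsilon')\vph\le\psi+O(1)$ for free, and that the real content is the reverse bound $\psi\le(1-\epsilon')\vph+O(1)$. But the two mechanisms you invoke to obtain it are both unjustified. First, the assertion that $\psi=\cD_k\vph$ is ``maximal off $0$'' is false in general: $\cD_k\vph=\frac{1}{2k}\log\sum_m|f_{k,m}|^2$ involves (locally) more than $n$ generators, and such functions need not satisfy $(dd^c\psi)^n=0$ on $\omega\setminus\{0\}$; at best one knows $g_\psi=\psi+O(1)$ for isolated analytic singularities via a separate argument, not via maximality. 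Second, and more seriously, the claim that ``the hypothesis forces $\vph$ to be equivalent \dots\ to its own greenification'' is exactly what has to be proved: you need to pass from $g_{\cD_k\vph}\le(1-\epsilon)g_\vph$ to $\cD_k\vph\le(1-\epsilon')\vph+O(1)$, and since $g_\vph$ can exceed $\vph$ by an unbounded amount this step cannot be done by soft monotonicity alone. Your final paragraph concedes this (``quantifying this is the technical core of the argument'') without supplying the quantification. As written, the ``if'' half is a statement of what remains to be shown, not a proof.
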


\medskip
Since the greenifications of analytic singularities are continuous \cite{Za}, convergence (\ref{ratio}) implies $g_\vph\in C(\omega)$ for asymptotically analytic $\vph$.

\bigskip\noindent
{\bf Problem 2.1:} {\sl Construct $u\in \PSH_0^*$ with discontinuous $g_u$.}

\medskip\noindent
{\bf Problem 2.2:} {\sl Construct $u\in \PSH_0^*$ whose singularity is not asymptotically analytic.}

\bigskip

The {\it  type of $u\in\PSH_0$ relative to a maximal weight } $\vph\in \PSH_0^*$ \cite{R13} is
$$\sigma(u,\vph)=\liminf_{z\to 0}\frac{u(z)}{\vph(z)}.$$
Equivalently, it is the largest $\sigma\ge 0$ such that $u(z)\le \sigma\vph(z)+O(1)$.

\medskip\noindent
{\bf Example:} $\nu_u=\sigma(u,\log|z|)$. More generally, let $\phi_a(z)=\max_i a_i^{-1}{\log|z_i|}$, $a_i>0$, then $\sigma(u,\phi_a)$ is Kiselman's directional Lelong number \cite{Kis94} in the direction $a=(a_1,\ldots,a_n)$.

\medskip
It is known that the Lelong numbers of $\cD_ku$, both classical and directional, converge to those of $u$ \cite{D2}, \cite{R01}, and the same do their log canonical thresholds \cite{DK}.

\begin{theorem} {\rm \cite{R13}} The types $\sigma(\cD_ku,\vph)\rightarrow \sigma(u,\vph)$ for any $u\in \PSH_0$ if and only if $\vph$ has asymptotically analytic singularity.
\end{theorem}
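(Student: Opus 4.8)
Proof proposal for the final theorem ($\sigma(\cD_ku,\vph)\to\sigma(u,\vph)$ for all $u\in\PSH_0$ iff $\vph$ is asymptotically analytic).

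The plan is to control the types $\sigma(\cD_k v,\vph)$ by comparing them with those of $\vph$ itself, via the elementary scaling identity $\cD_t(c\vph)=c\,\cD_{tc}\vph$ ($c>0$) and $\cD_t(\vph+C)=\cD_t\vph+C$, both immediate from the definition of the weighted spaces $H_t$, together with the monotonicity $v_1\le v_2+O(1)\Rightarrow\cD_t v_1\le\cD_t v_2$ (comparison of weighted Bergman kernels) and the analogous properties of the greenification $g_{cv}=c\,g_v$ and $v_1\le v_2+O(1)\Rightarrow g_{v_1}\le g_{v_2}$. Throughout I use Demailly's construction for real parameters $t>0$, which is identical to the integer case; I also take $\vph$ genuinely singular, so that $g_\vph<0$ on $\omega\setminus 0$. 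One inequality is free: from $v-C/t\le\cD_t v$ and the "largest $\sigma$" characterization of the type, any $s$ with $\cD_t v\le s\vph+O(1)$ satisfies $v\le s\vph+O(1)$, hence $\sigma(\cD_t v,\vph)\le\sigma(v,\vph)$ for every $t$ and every $v\in\PSH_0$; in particular $\sigma(\cD_t\vph,\vph)\le 1$.

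For the "if" direction, assume $\vph$ is asymptotically analytic; we must show $\liminf_t\sigma(\cD_t v,\vph)\ge\sigma(v,\vph)$. If $\sigma:=\sigma(v,\vph)=0$ this is trivial, so let $\sigma>0$; then $v\le\sigma\vph+C$ near $0$, and monotonicity plus scaling give $\cD_t v\le\sigma\,\cD_{t\sigma}\vph+C$, whence $\sigma(\cD_t v,\vph)\ge\sigma\cdot\sigma(\cD_{t\sigma}\vph,\vph)$. Thus the whole direction reduces to the special self-referential case $v=\vph$: it suffices to prove $\sigma(\cD_t\vph,\vph)\to 1$. For this, fix $\delta>0$ and use asymptotic analyticity to pick $\psi\in\PSH_0^*$ with analytic (hence isolated) singularity and $(1+\delta)\vph\le\psi\le(1-\delta)\vph$ near $0$. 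Since $\psi$ has analytic singularity it is tame, so $\cD_t\psi\le(1-C/t)\psi+O(1)$ with $C=C(\delta)$; combining this with $\psi\le(1-\delta)\vph$ and with $(1+\delta)\cD_{t(1+\delta)}\vph=\cD_t((1+\delta)\vph)\le\cD_t\psi$ yields $\cD_{t(1+\delta)}\vph\le\tfrac{(1-C/t)(1-\delta)}{1+\delta}\vph+O(1)$, that is $\sigma(\cD_s\vph,\vph)\ge\tfrac{(1-\delta)(1-C(1+\delta)/s)}{1+\delta}$ for $s=t(1+\delta)$. Letting $s\to\infty$ and then $\delta\to 0$ gives $\liminf_s\sigma(\cD_s\vph,\vph)\ge 1$, which together with the free upper bound proves $\sigma(\cD_s\vph,\vph)\to 1$.

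For the "only if" direction, assume $\sigma(\cD_k v,\vph)\to\sigma(v,\vph)$ for all $v\in\PSH_0$ and apply the hypothesis to $v=\vph$ itself: since $\sigma(\vph,\vph)=1$, we get $\sigma(\cD_k\vph,\vph)=1-\epsilon_k$ with $\epsilon_k\ge 0$, $\epsilon_k\to 0$. By the "largest $\sigma$" description this means $\cD_k\vph\le(1-\epsilon_k)\vph+O(1)$ near $0$, so applying the greenification and using its monotonicity and homogeneity, $g_{\cD_k\vph}\le(1-\epsilon_k)\,g_\vph$; on the other hand $\vph\le\cD_k\vph+C/k$ gives $g_\vph\le g_{\cD_k\vph}$. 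Dividing by $g_\vph<0$ on $\omega\setminus 0$, these two bounds read $1-\epsilon_k\le g_{\cD_k\vph}/g_\vph\le 1$, i.e. $g_{\cD_k\vph}/g_\vph\to 1$ uniformly on $\omega\setminus 0$, which by the preceding theorem (\cite{R13}) is precisely the assertion that $\vph$ has asymptotically analytic singularity.

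The routine ingredients are the behaviour of $\cD_t$ and of the greenification under $v\mapsto cv$, $v\mapsto v+C$ and under $\le$, and the real-parameter version of Demailly's construction; the "only if" half is then short. The main point — and the only place asymptotic analyticity is genuinely used in the forward direction — is the self-referential limit $\sigma(\cD_t\vph,\vph)\to 1$: one must trap $\vph$ between two analytic (hence tame) singularities tightly enough that the loss factor $\tfrac{1-\delta}{1+\delta}$ tends to $1$, and one must check that the approximants $\psi$ may be taken in $\PSH_0^*$ with isolated zero so that the tameness bound applies. I expect that verification, together with the bookkeeping of the scaling parameters, to be the most delicate step.
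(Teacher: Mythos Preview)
The paper itself does not prove this theorem; it merely records the result and cites \cite{R13}. There is therefore no in-paper argument to compare against, and your proposal has to be judged on its own merits.

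Your argument is essentially correct in both directions. The ``only if'' half is clean: specializing the hypothesis to $v=\vph$ gives $\sigma(\cD_k\vph,\vph)\to 1$, and the resulting two-sided estimate $g_\vph\le g_{\cD_k\vph}\le(1-\epsilon_k)g_\vph$ is exactly the uniform ratio convergence $g_{\cD_k\vph}/g_\vph\to 1$ characterized in the preceding theorem. For the ``if'' half, your reduction to the self-referential statement $\sigma(\cD_t\vph,\vph)\to 1$ via the scaling identity $\cD_t(c\vph)=c\,\cD_{tc}\vph$ and the monotonicity of $\cD_t$ is the right move, and the sandwich argument---trapping $\vph$ between $(1-\delta)^{-1}\psi$ and $(1+\delta)^{-1}\psi$ with $\psi$ analytic, hence tame---is the natural way to extract the factor $(1-\delta)/(1+\delta)\to 1$.

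Two points deserve explicit attention before the argument is complete. First, your proof genuinely uses $\cD_t$ for non-integer $t$ (through $\cD_{t\sigma}$ and $\cD_{t(1+\delta)}$), while the statement is phrased for integer $k$; you should either state once that Demailly's construction and its basic properties (the bound $u\le\cD_tu+C/t$, monotonicity, the tameness inequality) carry over verbatim to real $t>0$, or indicate how to pass back to integers at the end. Second, in the ``only if'' half you divide by $g_\vph$, which requires $g_\vph<0$ on $\omega\setminus\{0\}$; this holds because $\vph$ is a genuine maximal weight in $\PSH_0^*$ (so $\tau_\vph>0$, hence $g_\vph\not\equiv 0$ by Theorem~4.1, and then $g_\vph<0$ by the maximum principle for plurisubharmonic functions $\le 0$). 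With these two remarks made explicit, your proof stands.
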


\section{ Functions with (P2) property}

\begin{theorem} {\rm \cite{R13}} For $u\in \PSH_0^*$,
TFAE:
\begin{enumerate}
\item[(i)] $\sup_k \tau_{\cD_ku}=\tau_u$;
\item[(ii)] $\lim_{k\to\infty} \tau_{\cD_ku}=\tau_u$;
\item[(iii)] $\inf_k g_{\cD_ku}=g_u$;
\item[(iv)] $\lim_{k\to\infty} g_{\cD_ku}=g_u$;
\item[(v)] there exist analytic singularities $\vph_j\ge u$ such that $\tau_{\vph_j}\to\tau_u$;
\item[(vi)] there exist maximal analytic singularities $\vph_j$ decreasing to $g_u$;
\item[(vii)] there $s_k>0$ and divisorial valuations $\cR_k$, $k=1,2,\ldots$, such that
$$ \sigma(v,g_u)=\inf_k s_k\cR_k(v)\quad\forall v\in PSH_0.$$
\end{enumerate}
\end{theorem}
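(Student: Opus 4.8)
The plan is to establish the chain of implications by exploiting the monotonicity built into Demailly's approximation together with Theorem on greenifications (the characterization $g_u\equiv 0\iff\tau_u=0$, and more generally the correspondence between residual masses and greenifications). First I would record the two universal one-sided inequalities that hold with no hypothesis: from $u\le\cD_ku+C/k$ and the definition of greenification we get $g_{\cD_ku}\ge g_u$ for every $k$, hence $\tau_{\cD_ku}=\tau_{g_{\cD_ku}}$ compares with $\tau_u$ only after passing through the comparison theorem (CT); and along the subsequence $k=2^j$ the functions $\cD_{2^j}u+C2^{-j-1}$ decrease to $u$, so by \cite{D93} the masses $\tau_{\cD_{2^j}u}$ converge to a limit $\ge\tau_u$ — here the point is that $\sup_k\tau_{\cD_ku}$ is \emph{a priori} $\ge\tau_u$, so statements (i) and (ii) really assert the reverse inequality. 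Thus (i)$\Leftrightarrow$(ii) is the statement that the supremum, when it equals $\tau_u$, is attained in the limit, which follows once one knows the sequence $g_{\cD_ku}$ is essentially monotone along $k=2^j$; and (iii)$\Leftrightarrow$(iv) is the greenification-level mirror of this, using that $g_{\cD_{2^j}u}$ decreases (up to the additive constants, which disappear under the $\limsup$-regularization defining $g$) to a maximal function that must be $g_u$ precisely when the masses match.

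Next I would pass between the mass formulation and the greenification formulation. The equivalence (ii)$\Leftrightarrow$(iv) is the heart of the matter: one direction uses that convergence of $g_{\cD_ku}$ to $g_u$ forces, via continuity of the Monge–Amp\`ere operator along decreasing sequences of maximal singularities with isolated pole (again \cite{D93}), $\tau_{\cD_ku}=(dd^cg_{\cD_ku})^n(0)\to(dd^cg_u)^n(0)=\tau_u$; the converse direction uses that the decreasing limit $\tilde g=\lim g_{\cD_{2^j}u}$ is a maximal singularity $\ge g_u$ with $\tau_{\tilde g}=\lim\tau_{\cD_{2^j}u}=\tau_u=\tau_{g_u}$, and then the uniqueness part of the Theorem characterizing greenifications (two maximal singularities with the same isolated pole and the same residual mass, one dominating the other, must coincide) gives $\tilde g=g_u$. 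For (v) and (vi): (vi)$\Rightarrow$(iv) is immediate since any decreasing sequence of maximal analytic singularities with limit $g_u$ can be compared with the $g_{\cD_ku}$; (iv)$\Rightarrow$(vi) is witnessed by the $g_{\cD_ku}$ themselves, which are analytic (greenifications of analytic singularities are analytic/continuous by \cite{Za}) and maximal; and (v) is the mass-level shadow of (vi), obtained by taking $\vph_j=\cD_{k_j}u$ in one direction and, in the other, applying the greenification operator to the $\vph_j$ (which only decreases them and preserves $\tau$) to land in the situation of (vi) after extracting a decreasing subsequence.

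Finally, (vii) is the valuative reformulation. The plan is to use that a maximal analytic singularity $\vph$ corresponds, via its generic Lelong numbers along the irreducible components of its polar set (or more precisely via the divisorial valuations appearing in a log-resolution of the defining ideal), to a finite infimum $\sigma(v,\cdot)=\min_i s_i\cR_i(v)$ of scaled divisorial valuations; this is the content of the identification of $\sigma(\cdot,\vph)$ with a toroidal/monomial valuation datum. Then (vi)$\Leftrightarrow$(vii) says that $g_u$ is a \emph{countable} decreasing limit of such finite infima exactly when its type functional $\sigma(\cdot,g_u)$ is itself a countable infimum $\inf_k s_k\cR_k$ of scaled divisorial valuations — one direction assembles the $\cR_k$ by listing all the divisorial valuations arising from the analytic singularities $\vph_j$ in (vi) with appropriate scalings, using that $\vph_j\searrow g_u$ translates into the infimum of the corresponding finite valuation-minima being $\sigma(\cdot,g_u)$; the converse direction builds, from a presentation $\sigma(\cdot,g_u)=\inf_k s_k\cR_k$, the analytic singularities $\vph_j$ as (regularized maxima of) the weights $s_k\log|\mathfrak{a}_{\cR_k}|$ over $k\le j$, checks they are maximal with isolated pole, and verifies they decrease to $g_u$ by comparing type functionals and invoking the greenification uniqueness once more.

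\medskip
The main obstacle I anticipate is the converse in (ii)$\Leftrightarrow$(iv): upgrading equality of residual masses $\tau_{\tilde g}=\tau_{g_u}$ for the two comparable maximal singularities $\tilde g\ge g_u$ to the pointwise identity $\tilde g=g_u$. This rigidity — a "Bishop-type" or uniqueness statement for maximal plurisubharmonic functions with isolated singularity and prescribed residual Monge–Amp\`ere mass — is exactly the nontrivial input isolating when the monotone approximation does not lose mass; everything else is bookkeeping around (CT), the decreasing-limit continuity of $(dd^c\cdot)^n$ from \cite{D93}, and the analyticity/continuity of greenifications of analytic singularities from \cite{Za}. The valuative step (vii) is conceptually the deepest but, granted the toric/valuative description of $\sigma(\cdot,\vph)$ for analytic $\vph$, reduces to the same kind of monotone-limit bookkeeping transported to the space of valuations.
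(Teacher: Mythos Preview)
The paper does not contain a proof of this theorem: it is quoted from \cite{R13} without argument, so there is no ``paper's own proof'' to compare against. What follows is therefore an assessment of your proposal on its own terms.

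Your overall architecture---reducing everything to monotonicity of the $\cD_ku$ along $k=2^j$, passing between masses and greenifications, and invoking a rigidity/domination principle for maximal isolated singularities---is the right shape, and you correctly isolate the genuinely nontrivial step (two comparable maximal singularities with equal residual mass must coincide). The valuative link to (vii) via log-resolutions of the analytic approximants is also the intended mechanism.

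However, you have the \emph{a priori} inequality backwards, and this is not a harmless slip because it inverts what the theorem actually asserts. From $u\le \cD_ku+C/k$ and (CT) one gets
\[
\tau_u=(dd^cu)^n(0)\ \ge\ (dd^c\cD_ku)^n(0)=\tau_{\cD_ku}
\]
for every $k$; equivalently $g_{\cD_ku}\ge g_u$ forces, via (CT), $\tau_{g_{\cD_ku}}\le\tau_{g_u}$. Hence $\sup_k\tau_{\cD_ku}\le\tau_u$ is the free inequality, and (i)--(ii) assert the \emph{opposite} one, namely that the approximations do not lose mass. Your sentence ``the masses $\tau_{\cD_{2^j}u}$ converge to a limit $\ge\tau_u$'' is likewise wrong: weak convergence $(dd^c\cD_{2^j}u)^n\to(dd^cu)^n$ gives only $\limsup\tau_{\cD_{2^j}u}\le\tau_u$ at the closed point $\{0\}$, which is exactly why the paper remarks that convergence as measures ``does not guarantee convergence of their masses at $0$.'' Once you correct the direction, the monotone sequence $\tau_{\cD_{2^j}u}$ is \emph{increasing} (by (CT) applied to $\cD_{2^{j+1}}u\le\cD_{2^j}u+O(1)$) and bounded above by $\tau_u$, so (i)$\Leftrightarrow$(ii) is immediate and the content of both is that the limit reaches $\tau_u$. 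Your (ii)$\Leftrightarrow$(iv) step then works as written, with the rigidity input doing the real work in the direction (ii)$\Rightarrow$(iv).
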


\noindent
{\bf Problem 2.3:} {\sl Is (P2) true for $u$ with FLE?}

{\footnotesize

Tek/nat, University of Stavanger, 4036 Stavanger, Norway

alexander.rashkovskii@uis.no 

\end{document}